\theoremstyle{plain}
\newtheorem{thm}{Theorem}
\newtheorem{lem}[thm]{Lemma}
\newtheorem{prop}[thm]{Proposition}
\newtheorem{remark}[thm]{Remark}
\theoremstyle{definition}
\newtheorem{definition}[thm]{Definition}
\newtheorem{exl}[thm]{Example}
\numberwithin{thm}{section}
\newcommand{\adj}{\leftrightarrow}
\newcommand{\adjeq}{\leftrightarroweq}
\DeclareMathOperator{\id}{id}
\def\Z{{\mathbb Z}}
\def\N{{\mathbb N}}
\def \Fix {Fix}
\title{Trimming and 
Building Freezing Sets}
\author{Laurence Boxer
\thanks{Department of Computer and Information Sciences, Niagara University, NY 14109, USA
and  \newline
Department of Computer Science and Engineering, State University of New York at Buffalo \newline
email: boxer@niagara.edu
}
}
\date{ }
\begin{document}
\maketitle{}

\begin{abstract}
We develop new tools for
the construction of fixed
point sets in digital
topology. We define {\em 
excludable} points and show
that these may be excluded
from all freezing sets.
We show that articulation
points are excludable.

We also present results 
concerning points that must 
belong to a freezing set and 
often are easily recognized.
These include
points of degree~1 and some
local extrema. \newline

Key words and phrases: digital topology, digital image, degree,
articulation point, freezing set,
1-coordinate local
extreme point
\newline

MSC: 54H25
\end{abstract}

\date
\maketitle

\section{Introduction}
Freezing sets are part
of the fixed point theory
of digital topology. They
were introduced
in~\cite{BxFPSets2} and
studied subsequently
in~\cite{BxConv, Bx21, BxArbDim, BxConseq, BxColdFreeze, BxLtd,BxCone}.
Given a digital image
$(X,\kappa)$, it is 
desirable to know of a
$\kappa$-freezing set 
for~$X$ that is as small
as possible. A reason
for this: Suppose
we wish to construct
a continuous self-map~$f$
on~$(X,\kappa)$ such that
all members of a
subset~$A$ of~$X$
are fixed by~$f$.
If~$A$ is known to be
a freezing set for~$(X,\kappa)$,
then it can be
concluded that $f=\id_X$, typically
in time depending
on~$\#A$ rather than
on~$\#X$. This could
be a useful savings
of time, since 
often~$\#A \ll \#X$.

In this paper, we study
conditions that can be 
used to exclude or
force membership in a
freezing set. Specifically, let $(X,\kappa)$
be a digital image.
\begin{itemize}
     \item In section~\ref{prelimSec} we give background
     material.
     \item In section~\ref{degree1Include} we show all  
    points of degree~1 in~$(X,\kappa)$
    must be included in all freezing sets of~$(X,\kappa)$.
    \item In section~\ref{excludeSec}, we
    introduce excludable sets. Such a subset
    of~$X$ consists of points that
    may be excluded from any minimal freezing
    set for~$(X,\kappa)$.
    \item In section~\ref{articSec}
    we show all articulation points
    of~$(X,\kappa)$ can be excluded 
    from freezing sets, in the sense that
    if $A$ is a freezing set and~$y$ is 
    an articulation point for~$(X,\kappa)$,
    then $A \setminus \{y\}$ is also
    is a freezing set for~$(X,\kappa)$.
   \item In section~\ref{1-DlocalSec}
   we develop the notion of a
{\em $1$-coordinate local 
extreme point} and show 
that such a point that
satisfies an additional
condition must be a member of every freezing set of~$(X,\kappa)$.
\item In section~\ref{boundsSec}
we develop bounds on the
cardinality of a minimal
freezing set for~$(X,\kappa)$.
\item Section~\ref{furtherRem}
has a brief summary of our
results.
\end{itemize}

\section{Preliminaries}
\label{prelimSec}
We use $\N$ for the set of natural numbers,
$\Z$ for the set of integers, and
$\#X$ for the number of distinct members of $X$.

We typically denote a (binary) digital image
as $(X,\kappa)$, where $X \subset \Z^n$ for some
$n \in \N$ and $\kappa$ represents an adjacency
relation of pairs of points in $X$. Thus,
$(X,\kappa)$ is a graph, in which members of $X$ may be
thought of as black points, and members of $\Z^n \setminus X$
as white points, of a picture of some ``real world" 
object or scene.

\subsection{Adjacencies}
This section is largely
quoted or paraphrased
from~\cite{BxArbDim}.

We use the notations $y \adj_{\kappa} x$, or, when
the adjacency $\kappa$ can be assumed, $y \adj x$, to mean
$x$ and $y$ are $\kappa$-adjacent.
The notations $y \adjeq_{\kappa} x$, or, when
$\kappa$ can be assumed, $y \adjeq x$, mean either
$y=x$ or $y \adj_{\kappa} x$.

For $x \in X$, let
\[ N(X,x,\kappa) =
  \{ \, y \in X \mid x \adj_{\kappa} y \, \}.
\]
The {\em degree} of $x$ in $(X,\kappa)$ 
is~$\#N(X,x,\kappa)$. We are especially
interested in points that have degree~1.

Let $u,n \in \N$, $1 \le u \le n$. 
Let $X \subset \Z^n$. Points
$x = (x_1, \ldots, x_n),~y=(y_1, \ldots, y_n) \in X$ 
are $c_u$-{\em adjacent}
if and only if
\begin{itemize}
    \item $x \neq y$, and
    \item for at most $u$ indices~$i$, 
          $\mid x_i - y_i \mid = 1$, and
    \item for all indices $j$ such that 
          $\mid x_j - y_j \mid \neq 1$, we have
          $x_j = y_j$.
\end{itemize}
The $c_u$ adjacencies are the adjacencies most used
in digital topology, especially $c_1$ and $c_n$.

In low dimensions, it is also common to denote a
$c_u$ adjacency by the number of points that can
have this adjacency with a given point in $\Z^n$. E.g.,
\begin{itemize}
    \item For subsets of $\Z^1$, $c_1$-adjacency is 2-adjacency.
    \item For subsets of $\Z^2$, $c_1$-adjacency is 4-adjacency and
          $c_2$-adjacency is 8-adjacency.
    \item For subsets of $\Z^3$, $c_1$-adjacency is 6-adjacency,
          $c_2$-adjacency is 18-adjacency, and
          $c_3$-adjacency is 26-adjacency.
\end{itemize}

A sequence $P=\{y_i\}_{i=0}^m$ in a digital image $(X,\kappa)$ is
a {\em $\kappa$-path from $a \in X$ to $b \in X$} if
$a=y_0$, $b=y_m$, and $y_i \adjeq_{\kappa} y_{i+1}$ 
for $0 \leq i < m$.

$X$ is {\em $\kappa$-connected}~\cite{Rosenfeld},
or {\em connected} when $\kappa$
is understood, if for every pair of points $a,b \in X$ there
exists a $\kappa$-path in $X$ from $a$ to $b$.

A {\em (digital) $\kappa$-closed curve} is a
path $S=\{s_i\}_{i=0}^{m-1}$ such that $s_0 \adj_{\kappa} s_{m-1}$,
and $i \neq j$ 
implies $s_i \neq s_j$. If also $0 \le i < m$ implies
\[ N(S,x_i,\kappa)=\{x_{(i-1)\mod m},~x_{(i+1)\mod m}\}
\]
then $S$ is a {\em (digital) 
$\kappa$-simple closed curve}. We say
the members of~$S$ 
are {\em circularly
labeled} if they are
indexed as described
above.

Let $X \subset \Z^n$.
The
{\em boundary of} $X$
{\rm \cite{Rosenf79}} is
\[Bd(X) = \{x \in X \, | \mbox{ there exists } y \in \Z^n \setminus X \mbox{ such that } y \adj_{c_1} x\}.
\]

\subsection{Digitally continuous functions}
This section is largely
quoted or paraphrased
from~\cite{BxArbDim}.

Digital continuity is defined
to preserve connectedness, as at
Definition~\ref{continuous} below. By
using adjacency as our standard of ``closeness," we
get Theorem~\ref{continuityPreserveAdj} below.

\begin{definition}
\label{continuous}
{\rm ~\cite{Bx99} (generalizing a definition of~\cite{Rosenfeld})}
Let $(X,\kappa)$ and $(Y,\lambda)$ be digital images.
A function $f: X \rightarrow Y$ is 
{\em $(\kappa,\lambda)$-continuous} if for
every $\kappa$-connected $A \subset X$ we have that
$f(A)$ is a $\lambda$-connected subset of $Y$.
\end{definition}

If either of $X$ or $Y$ is a subset of the 
other, we use the abbreviation
{\em $\kappa$-continuous} for {\em 
$(\kappa,\kappa)$-continuous}.

When the adjacency relations are understood, we will simply say that $f$ is \emph{continuous}. Continuity can be expressed in terms of adjacency of points:

\begin{thm}
{\rm ~\cite{Rosenfeld,Bx99}}
\label{continuityPreserveAdj}
A function $f:X\to Y$ is continuous if and only if $x \adj x'$ in $X$ 
implies $f(x) \adjeq f(x')$.
\end{thm}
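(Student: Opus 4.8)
The plan is to prove the two implications of the biconditional separately, using Definition~\ref{continuous} as the meaning of continuity and exploiting the fact that a subset of $Y$ with at most two elements is $\lambda$-connected precisely when its members are equal or $\lambda$-adjacent.

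For the forward implication, I would assume $f$ is continuous and let $x \adj x'$ in $X$. The set $A = \{x, x'\}$ is $\kappa$-connected, since the relation $x \adjeq_{\kappa} x'$ exhibits a $\kappa$-path of length one joining its two members (and the trivial path handles the remaining pair). Continuity then forces $f(A) = \{f(x), f(x')\}$ to be $\lambda$-connected. Such a set is $\lambda$-connected exactly when either it is a singleton, i.e. $f(x) = f(x')$, or its two distinct members are $\lambda$-adjacent, i.e. $f(x) \adj f(x')$; in both cases $f(x) \adjeq f(x')$, as desired.

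For the converse, I would assume the adjacency-preservation condition and verify the defining property of continuity. Let $A \subset X$ be $\kappa$-connected and let $b, b' \in f(A)$, say $b = f(a)$ and $b' = f(a')$ with $a, a' \in A$. Connectedness of $A$ supplies a $\kappa$-path $a = y_0, y_1, \ldots, y_m = a'$ in $A$ with $y_i \adjeq_{\kappa} y_{i+1}$ for each $i$. The key step is to check that applying $f$ carries this into a $\lambda$-path: for each $i$, either $y_i = y_{i+1}$, whence $f(y_i) = f(y_{i+1})$, or $y_i \adj y_{i+1}$, whence the hypothesis gives $f(y_i) \adjeq f(y_{i+1})$; in either case $f(y_i) \adjeq_{\lambda} f(y_{i+1})$. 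Since each $y_i \in A$, every $f(y_i)$ lies in $f(A)$, so $f(y_0), \ldots, f(y_m)$ is a $\lambda$-path in $f(A)$ from $b$ to $b'$. Hence $f(A)$ is $\lambda$-connected and $f$ is continuous.

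I do not expect a serious obstacle here; the only points requiring care are the bookkeeping around the reflexive closure $\adjeq$ --- ensuring that the degenerate case $f(x) = f(x')$ is correctly absorbed into the statement $f(x) \adjeq f(x')$ --- and the observation, used in the forward direction, that connectivity of a two-element image set already yields equality or adjacency of its members. These are the steps where a careless argument could silently assume that $f$ is injective, or that adjacency is irreflexive in a way that would break the equivalence.
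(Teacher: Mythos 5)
Your proof is correct: the forward direction properly reduces to the $\lambda$-connectedness of the two-point image set $\{f(x),f(x')\}$, the converse correctly pushes a $\kappa$-path in $A$ through $f$ step by step, and you handle the reflexive closure $\adjeq$ (the degenerate case $f(y_i)=f(y_{i+1})$) with the needed care. The paper itself states Theorem~\ref{continuityPreserveAdj} without proof, citing \cite{Rosenfeld,Bx99}; your argument is essentially the standard one appearing in those sources, so there is nothing in the paper's treatment for your proposal to diverge from.
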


See also~\cite{Chen94,Chen04}, where similar notions are referred to as {\em immersions}, {\em gradually varied operators},
and {\em gradually varied mappings}.

A digital {\em isomorphism} (called {\em homeomorphism}
in~\cite{Bx94}) is a $(\kappa,\lambda)$-continuous
surjection $f: X \to Y$ such that $f^{-1}: Y \to X$ is
$(\lambda,\kappa)$-continuous.

The literature uses {\em path} polymorphically: a
$(c_1,\kappa)$-continuous 
function $f: [0,m]_{\Z} \to X$
is a $\kappa$-path if 
$f([0,m]_{\Z})$ is a 
$\kappa$-path from $f(0)$ 
to $f(m)$
as described above.

We use $\id_X$ to denote
the {\em identity function}, $\id_X(x) = x$
for all $x \in X$.

Given a digital image
$(X,\kappa)$, we denote
by $C(X,\kappa)$ the set
of $\kappa$-continuous
functions $f: X \to X$.

Given $f \in C(X,\kappa)$,
a {\em fixed point} of $f$
is a point $x \in X$ such
that $f(x)=x$.
$\Fix(f)$ will denote
the set of fixed points
of~$f$. We say
$f$ is a {\em retraction},
and the set $Y=f(X)$ is a
{\em retract of $X$}, if
$f|_Y = \id_Y$; thus, 
$Y = \Fix(f)$.

%\subsection{Freezing sets}
\begin{definition}
\label{freezeDef}
{\rm \cite{BxFPSets2}}
Let $(X,\kappa)$ be a
digital image. We say
$A \subset X$ is a 
{\em freezing set for $X$}
if given $g \in C(X,\kappa)$,
$A \subset \Fix(g)$ implies
$g=\id_X$. A freezing set
$A$ is {\em minimal} if
no proper subset of $A$
is a freezing set for
$(X,\kappa)$.
\end{definition}

The following elementary
assertion was noted
in~\cite{BxFPSets2}.

\begin{lem}
\label{swelling}
Let $(X,\kappa)$ be
    a connected digital 
    image for which $A$ is
    a freezing set. If
    $A \subset A' \subset X$, then $A'$ is a
    freezing set for 
    $(X,\kappa)$.
\end{lem}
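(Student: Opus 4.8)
The plan is to argue directly from Definition~\ref{freezeDef}, exploiting nothing more than the monotonicity of the defining implication. To verify that $A'$ is a freezing set, I must check the single condition in the definition: for every $g \in C(X,\kappa)$, the containment $A' \subset \Fix(g)$ forces $g = \id_X$. So I would begin by fixing an arbitrary $g \in C(X,\kappa)$ and assuming the hypothesis $A' \subset \Fix(g)$.

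The key step is a transitivity observation on set containment. Since we are given $A \subset A'$, and we have just assumed $A' \subset \Fix(g)$, chaining these yields $A \subset A' \subset \Fix(g)$, hence $A \subset \Fix(g)$. At this point the burden shifts entirely onto the smaller set: because $A$ is by hypothesis a freezing set for $(X,\kappa)$, the containment $A \subset \Fix(g)$ triggers exactly the conclusion $g = \id_X$. As $g$ was an arbitrary continuous self-map satisfying $A' \subset \Fix(g)$, this shows $A'$ meets the freezing-set criterion, completing the argument.

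I expect no genuine obstacle here: the statement is a pure monotonicity fact, saying that enlarging a set that already forces the identity can only continue to force it. The one point worth flagging is that the connectedness hypothesis on $(X,\kappa)$, while stated, is not actually invoked anywhere in this reasoning; it is inherited from the standing assumptions of the surrounding development rather than needed for the logic. Thus the proof is essentially a single line once the definition is unwound, and the only care required is to quantify correctly over $g$ and to apply the freezing property of $A$ to the \emph{restricted} containment $A \subset \Fix(g)$ rather than to $A'$ itself.
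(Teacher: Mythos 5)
Your proof is correct and is exactly the elementary unwinding of Definition~\ref{freezeDef} that the paper intends: it states the lemma without proof (citing~\cite{BxFPSets2}) precisely because the argument is the one-line monotonicity observation you give, namely $A' \subset \Fix(g)$ and $A \subset A'$ imply $A \subset \Fix(g)$, whence $g = \id_X$. Your remark that connectedness is never used is also accurate.
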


Let $X \subset \Z^n$,
$x = (x_1, \ldots, x_n) \in Z^n$, where
each $x_i \in \Z$. For
each index~$i$,
the {\em projection map} (onto the $i^{th}$
coordinate) $p_i: X \to \Z$ is given by
$p_i(x) = x_i$.

\subsection{Tools for
determining fixed point sets}
In this section, we give some results, mostly
from earlier papers, that
help us determine fixed point sets for
digitally continuous self maps.
\begin{thm}
\label{freezeInvariant}
{\rm \cite{BxFPSets2}}
Let $A$ be a freezing set for the digital image $(X,\kappa)$ and let
$F: (X,\kappa) \to (Y,\lambda)$ be an isomorphism. Then $F(A)$ is
a freezing set for $(Y,\lambda)$.
\end{thm}

\begin{prop}
{\rm \cite{bs19a}}
\label{uniqueShortest}
Let $(X,\kappa)$ be a digital
image and $f \in C(X,\kappa)$.
Suppose $x,x' \in \Fix(f)$ are
such that there is a unique
shortest $\kappa$-path $P$ in $X$ 
from $x$ to $x'$. Then
$P \subset \Fix(f)$.
\end{prop}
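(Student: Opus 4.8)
The plan is to exploit the two facts that a continuous map sends paths to walks and that a map fixing both endpoints of a shortest path cannot produce anything shorter. I would begin by writing the unique shortest path as $P = \{y_i\}_{i=0}^m$ with $y_0 = x$ and $y_m = x'$, so that $m$ is exactly the $\kappa$-distance between $x$ and $x'$; by definition of \emph{shortest}, no $\kappa$-path from $x$ to $x'$ can have fewer than $m$ steps.

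Next I would bring in continuity. Since $y_i \adj y_{i+1}$ for each $i$, Theorem~\ref{continuityPreserveAdj} gives $f(y_i) \adjeq f(y_{i+1})$. Thus $\{f(y_i)\}_{i=0}^m$ is a $\kappa$-walk, and because $x,x' \in \Fix(f)$ it runs from $f(y_0)=x$ to $f(y_m)=x'$. This walk has at most $m$ nondegenerate steps, so a priori it yields a $\kappa$-path from $x$ to $x'$ of length at most~$m$.

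The crux is to upgrade this image walk to a genuine shortest path. If some step were degenerate (that is, $f(y_i)=f(y_{i+1})$), or if some vertex were repeated along the walk, then deleting the redundant term, or excising the closed subwalk between the two repeated occurrences, would produce a $\kappa$-path from $x$ to $x'$ of length strictly less than $m$, contradicting $m = d(x,x')$. Hence $\{f(y_i)\}_{i=0}^m$ is itself a simple $\kappa$-path of length $m$ from $x$ to $x'$, i.e.\ a shortest such path.

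Finally, the uniqueness hypothesis forces $\{f(y_i)\}_{i=0}^m$ to coincide termwise with $P$, so $f(y_i)=y_i$ for every $i$, whence $P \subset \Fix(f)$. I expect the main obstacle to be precisely the careful justification in the third step: one must rule out both degenerate steps and vertex repetitions in the image walk, since either would permit a length reduction and thereby contradict minimality. Everything else is a direct application of continuity and of the uniqueness assumption.
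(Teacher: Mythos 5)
Your proof is correct and is essentially the same argument as in the cited source \cite{bs19a} (the paper itself states Proposition~\ref{uniqueShortest} without proof): push $P=\{y_i\}_{i=0}^m$ forward by continuity via Theorem~\ref{continuityPreserveAdj}, note the image is a $\kappa$-path from $x$ to $x'$ of length at most $m=d(x,x')$, hence a shortest path, and invoke uniqueness to get termwise equality with $P$. One small simplification: since this paper defines a $\kappa$-path using $\adjeq$ (degenerate steps are allowed in paths), your excision step is unnecessary --- $\{f(y_i)\}_{i=0}^m$ is already a $\kappa$-path of length $m$, so it is shortest and equals $P$ immediately.
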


The following lemma may be
understood as saying that
if $q$ and $q'$ are
adjacent with $q$ in a
given direction from $q'$,
and if $f$ pulls $q$ 
further in that direction,
then $f$ also pulls $q'$
in that direction.

\begin{lem}
\label{pullingLem}
{\rm \cite{BxFPSets2}}
Let $(X,c_u)\subset \Z^n$ be a digital image, 
$1 \le u \le n$. Let $q, q' \in X$ be such that
$q \adj_{c_u} q'$.
Let $f \in C(X,c_u)$.
\begin{enumerate}
    \item If $p_i(f(q)) < p_i(q) < p_i(q')$
          then $p_i(f(q')) < p_i(q')$.
    \item If $p_i(f(q)) > p_i(q) > p_i(q')$
          then $p_i(f(q')) > p_i(q')$.
\end{enumerate}
\end{lem}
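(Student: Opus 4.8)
The plan is to reduce both assertions to elementary integer inequalities, drawing only on the definition of $c_u$-adjacency and on the characterization of continuity in Theorem~\ref{continuityPreserveAdj}. I would prove statement~(1) in full; statement~(2) then follows by the identical argument with all inequalities reversed (equivalently, by applying~(1) to the image obtained by negating the $i^{th}$ coordinate, which is an isomorphism preserving $c_u$-adjacency).

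The first key step is to unpack what $q \adj_{c_u} q'$ says about the $i^{th}$ coordinates. By the definition of $c_u$-adjacency, for every index $j$ one has $|p_j(q) - p_j(q')| \le 1$; in particular $|p_i(q) - p_i(q')| \le 1$. Since the hypothesis of~(1) gives $p_i(q) < p_i(q')$ and all coordinates are integers, this forces $p_i(q') = p_i(q) + 1$. The second key step is to obtain the analogous control on the images: since $q \adj_{c_u} q'$ and $f \in C(X,c_u)$, Theorem~\ref{continuityPreserveAdj} gives $f(q) \adjeq_{c_u} f(q')$, so either $f(q) = f(q')$ or $f(q) \adj_{c_u} f(q')$. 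In either case the coordinatewise bound again yields $|p_i(f(q)) - p_i(f(q'))| \le 1$, hence $p_i(f(q')) \le p_i(f(q)) + 1$.

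Finally I would chain the two bounds. The hypothesis $p_i(f(q)) < p_i(q)$ gives, over the integers, $p_i(f(q)) \le p_i(q) - 1$, so
\[
p_i(f(q')) \le p_i(f(q)) + 1 \le p_i(q) < p_i(q'),
\]
which is exactly the desired conclusion $p_i(f(q')) < p_i(q')$.

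There is no genuine obstacle here; the proof is a short chain of integer inequalities. The only points requiring care are (a) correctly invoking the definition of $c_u$-adjacency to extract the coordinatewise difference bound of~$1$ for \emph{both} pairs $(q,q')$ and $(f(q),f(q'))$, the latter via continuity, and (b) using integrality to convert the strict hypotheses into the sharper forms $p_i(q') \ge p_i(q)+1$ and $p_i(f(q)) \le p_i(q)-1$ that make the chain close up with room to spare. I would also note explicitly that the middle coordinate condition $p_i(q) < p_i(q')$ is what pins $q$ and $q'$ to differ by exactly $1$ in the $i^{th}$ slot, which is the feature that prevents the images from ``crossing past'' each other.
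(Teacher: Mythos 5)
Your proof is correct. Note that the paper itself gives no proof of this lemma---it is quoted from \cite{BxFPSets2}---but your argument (extract the coordinatewise bound $|p_i(f(q))-p_i(f(q'))|\le 1$ from continuity via Theorem~\ref{continuityPreserveAdj}, use integrality to sharpen the strict hypotheses, and chain the inequalities, with part~(2) by symmetry) is exactly the standard argument used in the cited source, so there is nothing to correct or compare beyond that.
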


The following has been
relied on implicitly 
in several previous
papers. It is an
extension of 
Lemma~\ref{pullingLem}
showing that a 
continuous function
can pull a digital arc
that is monotone with
respect to a given
coordinate~$i$
in the direction of 
monotonicity.

\begin{prop}
    \label{pullPath}
    Let $(X,c_u)\subset \Z^n$ be a digital image, 
$1 \le u \le n$. Let $q, q' \in X$.
Let $f \in C(X,c_u)$.
\begin{enumerate}
\item Suppose 
    $\{q_j\}_{j=0}^m$
    is a $c_u$-path 
    in~$X$ such that
    $q_0=q$, $q_{m}=q'$, 
    $p_i(f(q)) < p_i(q)$, and
    for $0 \le j < m$
    we have 
    $p_i(q_j) < p_i(q_{j+1})$.
    Then $p_i(f(q_j)) < p_i(q_j)$ for 
    $0 \le j \le m$.
    \item Suppose 
    $\{q_j\}_{j=0}^m$
    is a $c_u$-path 
    in~$X$ such that
    $q_0=q$, $q_{m}=q'$, 
    $p_i(f(q)) > p_i(q)$, and
    for $0 \le j < m$
    we have 
    $p_i(q_j) > p_i(q_{j-1})$.
    Then $p_i(f(q_j)) > p_i(q_j)$ for 
    $0 \le j \le m$.
\end{enumerate}
\end{prop}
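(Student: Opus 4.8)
The plan is to prove each statement by a straightforward induction on the path index~$j$, with Lemma~\ref{pullingLem} serving as the single-step engine that advances the conclusion from~$q_j$ to~$q_{j+1}$. I would carry out part~(1) in full and then obtain part~(2) by the mirror-image argument: negating the $i$-th coordinate is a $c_u$-isomorphism that interchanges ``$<$'' with ``$>$'', and hence converts the hypotheses of part~(1) into those of part~(2) and Lemma~\ref{pullingLem}(1) into Lemma~\ref{pullingLem}(2). Thus I would not repeat the computation for part~(2).

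For part~(1), the base case $j=0$ is immediate, being exactly the hypothesis $p_i(f(q_0)) = p_i(f(q)) < p_i(q) = p_i(q_0)$. For the inductive step, suppose $p_i(f(q_j)) < p_i(q_j)$ for some $j$ with $0 \le j < m$. Combining this with the monotonicity hypothesis $p_i(q_j) < p_i(q_{j+1})$ yields the chain
\[
  p_i(f(q_j)) < p_i(q_j) < p_i(q_{j+1}),
\]
which is precisely the hypothesis of Lemma~\ref{pullingLem}(1) applied to the pair $(q_j, q_{j+1})$; its conclusion $p_i(f(q_{j+1})) < p_i(q_{j+1})$ advances the induction. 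Hence the desired inequality holds for all $0 \le j \le m$.

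The one point needing care --- the step I would flag as the main obstacle --- is a bookkeeping issue about the adjacency itself: the definition of a $c_u$-path guarantees only $q_j \adjeq_{c_u} q_{j+1}$, whereas Lemma~\ref{pullingLem} requires the proper adjacency $q_j \adj_{c_u} q_{j+1}$. Before invoking the lemma I would therefore observe that the strict monotonicity $p_i(q_j) < p_i(q_{j+1})$ forces $q_j \neq q_{j+1}$, ruling out the degenerate equal-points case and upgrading $\adjeq_{c_u}$ to $\adj_{c_u}$. Conveniently, this same strict inequality supplies the middle term of the chain above, so the monotonicity hypothesis resolves both requirements of the lemma at once. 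For part~(2) the analogous chain is $p_i(f(q_j)) > p_i(q_j) > p_i(q_{j+1})$, so the monotonicity hypothesis there must be read as a path \emph{decreasing} in the $i$-th coordinate, namely $p_i(q_j) > p_i(q_{j+1})$; this is exactly the form produced by the coordinate reflection and the form that feeds Lemma~\ref{pullingLem}(2).
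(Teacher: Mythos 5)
Your proof is correct and takes essentially the same route as the paper: induction on the path index with Lemma~\ref{pullingLem} supplying the inductive step (note the paper's own proof cites ``Lemma~\ref{pullPath}'' there, a self-referential typo for Lemma~\ref{pullingLem}), with the paper dismissing part~(2) as ``proven similarly'' where you give the reflection argument. Your added care is sound and addresses details the paper leaves implicit: strict monotonicity does upgrade $q_j \adjeq_{c_u} q_{j+1}$ to $q_j \adj_{c_u} q_{j+1}$, and you are right that the monotonicity hypothesis in part~(2), as printed (it invokes the undefined $q_{-1}$), must be read as a path decreasing in the $i$-th coordinate, $p_i(q_j) > p_i(q_{j+1})$, for Lemma~\ref{pullingLem}(2) to apply.
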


\begin{proof}
We prove the first
assertion; the second
is proven similarly.
We argue by induction.
We know
\[ p_i(f(q_0)) = p_i(f(q)) < p_i(q) =
p_i(q_0).
\]

Suppose we have
$p_i(f(q_k)) < p_i(q_k)$ for some
$k < m$. Then by
Lemma~\ref{pullPath},
$p_i(f(q_{k+1})) < p_i(q_{k+1})$. This
completes our induction.
\end{proof}

\begin{definition}
{\rm \cite{BxConv}}
    Let $\kappa \in \{c_1,c_2\}$. We say a $\kappa$-connected 
set $S=\{x_i\}_{i=1}^n \subset \Z^2$ for $n>1$ is a
{\em (digital) line segment} if the members of $S$ are collinear.
\end{definition}

\begin{remark}
\label{segSlope}
{\rm \cite{BxConv}}
A digital line segment must be vertical, horizontal, or have
slope of $\pm 1$. We say a segment with slope of $\pm 1$ is
{\em slanted}.
\end{remark}

\begin{lem}
\label{c1EndsFixed}
    Let $(X,c_1)\subset \Z^2$ be a connected 
    digital image. 
    Let $L$ be a 
    horizontal or vertical digital 
    line segment of 
    at least 2 
    points contained in~$X$. Let
    $f \in C(X,c_1)$. Suppose
    \begin{equation}
    \label{c1-fixedEndpts}
      \mbox{the endpoints of~$L$ are in~$\Fix(f)$.}  
    \end{equation}
     Then 
    $L \subset \Fix(f)$.
\end{lem}

\begin{proof}
This follows from
Proposition~\ref{uniqueShortest}, since $L$ is
the unique shortest 
$c_1$-path in~$X$ between
the endpoints of~$L$.
\end{proof}

We do not have an
analog of
Lemma~\ref{c1EndsFixed}
for slanted segments,
as shown by the following.

\begin{exl}
    Let
 \[ X = \{(0,0), (0,1), (0,2), (1,0),
 (1,1), (2,0) \}.
 \]
 Let $S = \{(0,2),
 (1,1), (2,0)\}$.
 Let $f: X \to X$ be
 given by
 \[ f(x) = \left \{
 \begin{array}{ll}
     x & \mbox{if }
        x \neq (1,1);\\
     (0,0) & \mbox{if } x = (1,1). 
 \end{array}
 \right .
 \]
 It is easily seen
 that $f \in C(X,c_1)$,
 that $S$ is a slanted segment with
 endpoints $(0,2)$
 and $(2,0)$ in
 $\Fix(f)$, but
 $(1,1) \in S \setminus \Fix(f)$.
\end{exl}

We have the following
analog of
Lemma~\ref{c1EndsFixed}
for the $c_2$ adjacency.

\begin{lem}
\label{c2EndsFixed}
    Let $(X,c_2)\subset \Z^2$ be a connected 
    digital image. Let
    $L$ be a slanted 
    digital line
    segment contained in~$X$. Let
    $f \in C(X,c_2)$. Suppose~(\ref{c1-fixedEndpts}). Then 
    $L \subset \Fix(f)$.
\end{lem}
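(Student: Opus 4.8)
The plan is to follow the strategy of the proof of Lemma~\ref{c1EndsFixed}: reduce everything to Proposition~\ref{uniqueShortest} by showing that the slanted segment~$L$ is the \emph{unique} shortest $c_2$-path in~$X$ between its two endpoints. Since the endpoints are assumed to lie in $\Fix(f)$, Proposition~\ref{uniqueShortest} then immediately yields $L \subset \Fix(f)$.

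First I would fix notation. By Remark~\ref{segSlope} a slanted segment has slope $\pm 1$, and by symmetry it suffices to treat slope $+1$ (the slope $-1$ case being entirely analogous, or reducible to it via the $c_2$-isomorphism $(x_1,x_2)\mapsto(x_1,-x_2)$ together with Theorem~\ref{freezeInvariant}). So write the endpoints as $a=(a_1,a_2)$ and $b=(a_1+k,a_2+k)$ with $k\ge 1$, and $L=\{(a_1+j,a_2+j)\}_{j=0}^{k}$. The central fact I would establish is that the $c_2$-graph distance in $\Z^2$ coincides with the Chebyshev distance
\[ d(x,y)=\max\{|x_1-y_1|,\,|x_2-y_2|\}, \]
the point being that a single $c_2$-step changes each coordinate by at most~$1$, so it cannot reduce $d$ by more than~$1$, while a diagonal step realizes this bound. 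Consequently $d(a,b)=k$; and since $L$ is a $c_2$-path of length~$k$ while every $c_2$-path in $X\subset\Z^2$ from $a$ to $b$ has length at least $d(a,b)=k$, the segment $L$ is a shortest $c_2$-path in~$X$.

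The main work is uniqueness, and this is the step I expect to be the crux. I would take an arbitrary shortest $c_2$-path $P=\{y_j\}_{j=0}^{k}$ from $a$ to $b$, write $y_j=(u_j,v_j)$, and argue a forcing statement: over the $k$ steps the first coordinate must increase by a total of~$k$, yet each $c_2$-step changes it by at most~$1$, so every step must increase it by exactly~$1$; the same reasoning applies to the second coordinate. This pins down $y_j=(a_1+j,a_2+j)$ for each~$j$, so $P=L$. Because any $c_2$-path in~$X$ is in particular a $c_2$-path in $\Z^2$, uniqueness in $\Z^2$ gives uniqueness in~$X$, thereby verifying the hypotheses of Proposition~\ref{uniqueShortest} and completing the proof.
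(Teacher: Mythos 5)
Your proposal is correct and follows exactly the paper's approach: the paper's proof is the one-line observation that $L$ is the unique shortest $c_2$-path in~$X$ between its endpoints, so Proposition~\ref{uniqueShortest} applies. You simply supply the details (the Chebyshev-distance lower bound and the coordinate-forcing argument for uniqueness) that the paper leaves implicit, and these details are sound.
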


\begin{proof}
This follows from
Proposition~\ref{uniqueShortest}, since $L$ is
the unique shortest 
$c_2$-path in~$X$ between
the endpoints of~$L$.
\end{proof}

We do not have an analog
of Lemma~\ref{c2EndsFixed}
for horizontal or vertical
digital line segments, as
shown in the following.

\begin{exl}
   Let $X = [0,2]_{\Z}^2
   \subset \Z^2$. 
    The function
    $f: X \to X$ given by
    \[ f(x) = \left \{
    \begin{array}{ll}
    (1,1) & \mbox{if } 
        x \in \{(0,1), (1,0) \}; \\
    x & \mbox{otherwise,}
    \end{array}
    \right .
    \]
    belongs to $C(X,c_2)$. The
    digital line 
    segments
\[ S_1 = \{(0,0), (1,0), (2,0) \},~~~~
   S_2 = \{(0,0), (0,1), (0,2) \}
\]
have endpoints 
in~$\Fix(f)$ and are
respectively horizontal
and vertical, but neither of $S_1, S_2$
is a subset of~   $\Fix(f)$.
\end{exl}

\begin{thm}
\label{bdFreezes}
{\rm \cite{BxFPSets2}}
Let $X \subset \Z^n$ be finite. Then 
for $1 \le u \le n$, $Bd(X)$ is 
a freezing set for $(X,c_u)$.
\end{thm}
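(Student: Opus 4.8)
The plan is to show directly that any $g \in C(X,c_u)$ fixing every point of $Bd(X)$ must equal $\id_X$; by Definition~\ref{freezeDef} this is precisely what it means for $Bd(X)$ to be a freezing set. First I would record two elementary observations. Any two lattice points differing by $1$ in a single coordinate are $c_1$-adjacent, hence $c_u$-adjacent for every $u \ge 1$; consequently a contiguous axis-parallel segment of points of $X$ is a $c_u$-path, and any $x \in X$ possessing an axis-parallel $c_1$-neighbor in $\Z^n \setminus X$ lies in $Bd(X)$.

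Suppose, for contradiction, that $g$ fixes $Bd(X)$ but $g(x) \neq x$ for some $x \in X$. Then $p_i(g(x)) \neq p_i(x)$ for some coordinate $i$. By symmetry assume $p_i(g(x)) < p_i(x)$ (the reverse inequality is handled identically using the second parts of Lemma~\ref{pullingLem} and Proposition~\ref{pullPath}). I would then walk from $x$ in the direction of increasing $i$th coordinate: set $q_0 = x$ and $q_{j+1} = q_j + e_i$ so long as the next point remains in $X$, where $e_i$ denotes the unit increment in coordinate $i$. Since $X$ is finite, this walk terminates, producing a contiguous axis-parallel $c_u$-path $q_0, \ldots, q_m$ in $X$ whose final point $x' := q_m$ satisfies $x' + e_i \notin X$; hence $x' \in Bd(X)$ and is therefore fixed by $g$.

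The path $\{q_j\}_{j=0}^m$ is strictly increasing in the $i$th coordinate and has $p_i(g(q_0)) < p_i(q_0)$, so Proposition~\ref{pullPath}(1), which propagates a downward pull along a coordinate-increasing path, yields $p_i(g(x')) < p_i(x')$. This contradicts $x' \in \Fix(g)$, so no such $x$ can exist and $g = \id_X$, as required. The core of the argument is the reduction to a single coordinate together with the terminating walk to the boundary; I expect the only delicate point to be verifying that this walk always reaches a point of $Bd(X)$ while remaining inside $X$, which is exactly where the finiteness of $X$ and the implication $c_1 \Rightarrow c_u$ are used. Everything else is bookkeeping, since the pulling machinery of Proposition~\ref{pullPath} supplies the real content.
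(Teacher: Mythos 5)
Your proof is correct and follows essentially the same route as the original argument in~\cite{BxFPSets2} (the present paper only cites the result): reduce to a single coordinate, walk along an axis-parallel $c_1$- (hence $c_u$-) path to a point whose successor leaves $X$, which must lie in $Bd(X)$, and propagate the pull via Lemma~\ref{pullingLem}/Proposition~\ref{pullPath} to contradict that boundary point being fixed. Indeed, Proposition~\ref{pullPath} is stated in this paper precisely to formalize the inductive pulling step you invoke, so your use of it is exactly as intended.
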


\section{Include degree-1 points}
\label{degree1Include}
Sometimes, there are points
of a digital image that are
easily recognized as 
belonging to freezing sets,
as we see in the following.

\begin{thm}
    \label{degree1}
    Let $(X,\kappa)$ be
    a connected digital
    image. Let $A$ be a
    freezing set for
    $(X,\kappa)$. Let
    $x_0 \in X$ be a
    point that has
    degree~1. Then
    $x_0 \in A$.
\end{thm}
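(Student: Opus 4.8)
The plan is to prove the contrapositive: I would show that if $x_0 \notin A$, then $A$ cannot be a freezing set, by exhibiting an explicit non-identity map in $C(X,\kappa)$ that fixes every point of $A$. Since $x_0$ has degree~$1$, the set $N(X,x_0,\kappa)$ consists of a single point; call it~$y$. The natural candidate is the map $g: X \to X$ defined by $g(x_0) = y$ and $g(x) = x$ for all $x \neq x_0$. By construction $g \neq \id_X$ (because $y \neq x_0$), and $\Fix(g) = X \setminus \{x_0\}$.

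The key step, and the only place where the degree-$1$ hypothesis does real work, is verifying that $g$ is $\kappa$-continuous. I would appeal to Theorem~\ref{continuityPreserveAdj}: it suffices to check that $x \adj_\kappa x'$ implies $g(x) \adjeq_\kappa g(x')$. If neither $x$ nor $x'$ equals $x_0$, then $g$ acts as the identity on both and the implication is immediate. The only remaining adjacencies to consider are those of the form $x_0 \adj_\kappa x'$, and here the degree-$1$ condition is decisive: the unique such~$x'$ is $y$ itself. In that case $g(x_0) = y = g(y)$, so $g(x_0) \adjeq_\kappa g(y)$ holds (indeed with equality). Thus $g \in C(X,\kappa)$.

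With $g$ in hand, I would finish as follows. Assuming $x_0 \notin A$, we have $A \subset X \setminus \{x_0\} = \Fix(g)$. Since $A$ is a freezing set for $(X,\kappa)$, Definition~\ref{freezeDef} forces $g = \id_X$, contradicting $g(x_0) = y \neq x_0$. Hence $x_0 \in A$, as claimed.

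I expect the argument to be short, and I do not anticipate a serious obstacle: the main subtlety is simply recognizing that the degree-$1$ hypothesis is exactly what guarantees that collapsing $x_0$ onto its neighbor introduces no new adjacency violations, so that the collapse map is continuous. Were $x_0$ to have two or more neighbors, this construction could fail continuity, which is why the statement is special to degree-$1$ points.
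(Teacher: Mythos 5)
Your proposal is correct and follows essentially the same route as the paper: both define the map sending $x_0$ to its unique neighbor and fixing everything else, observe it lies in $C(X,\kappa)$, and conclude from Definition~\ref{freezeDef} that $A$ could not be a freezing set if $x_0 \notin A$. Your explicit verification of continuity via Theorem~\ref{continuityPreserveAdj} is a welcome elaboration of the step the paper dismisses as ``easily seen.''
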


\begin{proof}
    Let $x_1$ be the unique member
    of~$X$ such that
    $x_0 \adjeq_{\kappa} x_1$.
    If $x_0 \not \in A$ then the
    function $f: X \to X$ given by
    \[ f(x) = \left \{ \begin{array}{ll}
        x & \mbox{if } x \neq x_0; \\
        x_1 &  \mbox{if } x = x_0,
    \end{array}
    \right .
    \]
    is easily seen to belong to 
    $C(X,\kappa)$. Also, $f|_A = \id_A$,
    and $f \neq \id_X$. The assertion
    follows.
\end{proof}

\section{Excludable
sets}
\label{excludeSec}
We develop the notion of an excludable set and
show how this notion helps us determine
minimal or small freezing sets.

\begin{definition}
\label{excludeDef}
    Let $(X,\kappa)$ be
    a connected digital 
    image. Let $W \subset X$. We say
    $W$ is {\em excludable from freezing sets for $(X,\kappa)$}
    ({\em excludable} for short) if for
    every freezing set
    $A$ of $(X,\kappa)$,
    if $A \setminus W \neq 
    \emptyset$ then
    $A \setminus W$ is
    also a freezing set. If $p \in W$
    then $p$ is an
    {\em excludable point}.
\end{definition}

\begin{remark}
    By Definition~\ref{excludeDef}, if $A$ is
    a minimal freezing
    set and $W$ is 
    excludable for $(X,\kappa)$, then
    $A \cap W = \emptyset$.
\end{remark}

\begin{prop}
\label{excludeEquiv}
    Let $(X,\kappa)$ 
    be a finite connected 
    digital image. 
    Let $W$ be an 
    excludable set 
    for $X$. Then
    every subset
    of~$W$ is excludable.
\end{prop}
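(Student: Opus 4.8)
The plan is to reduce everything to the monotonicity of freezing sets recorded in Lemma~\ref{swelling}. Fix a subset $V \subseteq W$ and, following Definition~\ref{excludeDef}, let $A$ be an arbitrary freezing set for $(X,\kappa)$ with $A \setminus V \neq \emptyset$; the goal is to show that $A \setminus V$ is again a freezing set. The observation driving the argument is that $V \subseteq W$ forces $X \setminus W \subseteq X \setminus V$, hence
\[ A \setminus W \subseteq A \setminus V \subseteq X. \]
Thus the set $A \setminus V$ we care about is sandwiched between the more aggressively trimmed set $A \setminus W$ and the whole image.

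First I would treat the principal case $A \setminus W \neq \emptyset$. Here excludability of $W$ (Definition~\ref{excludeDef}) says that $A \setminus W$ is itself a freezing set, and since $A \setminus W \subseteq A \setminus V \subseteq X$, Lemma~\ref{swelling} immediately promotes $A \setminus V$ to a freezing set. This case needs no new ideas beyond the displayed inclusion.

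The main obstacle is the degenerate case $A \setminus W = \emptyset$, i.e.\ $A \subseteq W$, in which the defining implication of excludability for $W$ has a false hypothesis and so tells us nothing about $A$ directly. This is precisely where I expect to use the finiteness of $X$: since $A$ is a finite freezing set, repeatedly discarding a removable point (a process that must terminate because $\#A$ strictly decreases) yields a minimal freezing set $A_0 \subseteq A$. By the Remark following Definition~\ref{excludeDef}, every minimal freezing set is disjoint from the excludable set $W$, so $A_0 \cap W = \emptyset$; combined with $A_0 \subseteq A \subseteq W$ this forces $A_0 = \emptyset$. Hence $\emptyset$ is a freezing set, and one further application of Lemma~\ref{swelling} (to $\emptyset \subseteq A \setminus V \subseteq X$) shows that $A \setminus V$ is a freezing set.

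Since $A$ was an arbitrary freezing set with $A \setminus V \neq \emptyset$, this establishes that $V$ is excludable, and as $V$ was an arbitrary subset of $W$, the Proposition follows. I expect the only delicate point to be this second case; in the typical situation one has $A \setminus W \neq \emptyset$ and the single line through Lemma~\ref{swelling} suffices, so the finiteness hypothesis earns its place exactly in extracting the minimal freezing set $A_0$ and ruling out a nonempty freezing set contained entirely in $W$.
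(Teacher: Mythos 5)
Your first case is, in essence, the paper's entire proof: the paper disposes of this Proposition with the single line ``This follows from Lemma~\ref{swelling},'' i.e., precisely your observation that $A\setminus W \subseteq A\setminus V \subseteq X$ together with the monotonicity of freezing sets. You are right that this one-liner silently skips the degenerate case $A\subseteq W$, where the defining implication for $W$ has a false hypothesis; noticing that is to your credit.

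However, your handling of that case contains a genuine gap: it is circular. You settle it by citing the Remark after Definition~\ref{excludeDef} (a minimal freezing set is disjoint from any excludable set), but the paper justifies that Remark only ``by Definition~\ref{excludeDef},'' and the Definition is exactly as vacuous for a minimal freezing set $A_0\subseteq W$ as it is for your $A\subseteq W$ --- so the Remark cannot be used as a black box in the one case where you invoke it. Indeed, both the Remark and your argument fail outright when $W=X$: $W=X$ is vacuously excludable (every $A\setminus W$ is empty), yet for $X=[0,2]_{\Z}$ with $c_1$ the subset $V=\{0\}$ is not excludable, since $A=\{0,2\}$ is a freezing set while $A\setminus V=\{2\}$ is not (the constant map at $2$ fixes $2$ but is not $\id_X$). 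Your proof, which nowhere excludes $W=X$, would thus establish a false conclusion; the Proposition itself needs $W$ proper read into it. For proper $W$ the degenerate case can be closed honestly, and without any appeal to minimality or the Remark: if $\emptyset\neq A\subseteq W\subsetneq X$ with $A$ a freezing set, pick $z\in X\setminus W$; Lemma~\ref{swelling} makes $A\cup\{z\}$ a freezing set with $(A\cup\{z\})\setminus W=\{z\}\neq\emptyset$, so excludability of $W$ would force the singleton $\{z\}$ to be a freezing set, contradicting the constant map at $z$ whenever $\#X\geq 2$ (and when $\#X=1$ everything is trivially a freezing set). Hence the degenerate case is void for proper $W$, your extraction of a minimal freezing set is unnecessary, and the paper's one-line appeal to Lemma~\ref{swelling} --- your first case --- is the whole proof.
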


\begin{proof}
This follows from
Lemma~\ref{swelling}.
\end{proof}

\begin{prop}
    \label{excludeIsTopological}
    Let $F: (X,\kappa) \to (Y, \lambda)$
    be an isomorphism 
    of digital images.
    If $W$ is
        an excludable
        set for $X$ then $F(W)$ is
        an excludable
        set for $Y$.
\end{prop}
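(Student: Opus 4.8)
The plan is to reduce the statement entirely to Theorem~\ref{freezeInvariant}, which transports freezing sets across an isomorphism, together with the elementary fact that a bijection commutes with set difference: since $F$ is injective, $F(S \setminus T) = F(S) \setminus F(T)$ for all $S,T \subset X$. The crucial extra observation is that $F^{-1} : (Y,\lambda) \to (X,\kappa)$ is again an isomorphism, so Theorem~\ref{freezeInvariant} may be applied in both directions.

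Concretely, I would start by fixing an arbitrary freezing set $B$ for $(Y,\lambda)$ satisfying $B \setminus F(W) \neq \emptyset$, since verifying the defining condition in Definition~\ref{excludeDef} for $F(W)$ amounts to showing that each such $B \setminus F(W)$ is a freezing set for $(Y,\lambda)$. Applying Theorem~\ref{freezeInvariant} to the isomorphism $F^{-1}$, the pullback $A := F^{-1}(B)$ is a freezing set for $(X,\kappa)$. Using that $F$ is a bijection, I would record the two identities $F(A \setminus W) = F(F^{-1}(B)) \setminus F(W) = B \setminus F(W)$ and, taking images under the bijection $F$, that $A \setminus W = \emptyset$ holds if and only if $B \setminus F(W) = \emptyset$. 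Hence the hypothesis $B \setminus F(W) \neq \emptyset$ yields $A \setminus W \neq \emptyset$.

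The conclusion then follows by chaining the excludability hypothesis with Theorem~\ref{freezeInvariant} once more: because $W$ is excludable for $X$ and $A \setminus W \neq \emptyset$, the set $A \setminus W$ is a freezing set for $(X,\kappa)$; applying Theorem~\ref{freezeInvariant} now to $F$ itself, $F(A \setminus W)$ is a freezing set for $(Y,\lambda)$. Since $F(A \setminus W) = B \setminus F(W)$, this is exactly the desired freezing set, completing the verification.

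I do not expect a genuine obstacle here, as the argument is a bookkeeping exercise once the right tools are identified. The only point requiring care is the bijectivity step: the identity $F(S \setminus T) = F(S) \setminus F(T)$ fails for non-injective maps, so I would make explicit that it is the \emph{isomorphism} property of $F$ (in particular injectivity), rather than mere continuity, that lets the set difference and the nonemptiness condition pass cleanly through $F$ and $F^{-1}$.
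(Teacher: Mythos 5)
Your proposal is correct, and it takes a genuinely different route from the paper. The paper does not invoke Theorem~\ref{freezeInvariant} at all: it argues directly from Definition~\ref{freezeDef}, taking $f \in C(Y,\lambda)$ with $f|_{F(A)\setminus F(W)} = \id$, conjugating to $g = F^{-1}\circ f \circ F \in C(X,\kappa)$, checking pointwise that $g$ fixes $A\setminus W$, concluding $g = \id_X$ from excludability of $W$, and then $f = F\circ g\circ F^{-1} = \id_Y$ --- in effect re-proving the content of Theorem~\ref{freezeInvariant} inline via conjugation. You instead treat Theorem~\ref{freezeInvariant} as a black box, applied once to $F^{-1}$ (to pull an arbitrary freezing set $B$ of $(Y,\lambda)$ back to $A = F^{-1}(B)$) and once to $F$ (to push $A\setminus W$ forward), with the bijectivity identity $F(A\setminus W) = B \setminus F(W)$ doing the bookkeeping. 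Your version buys two things the paper's proof leaves implicit: first, Definition~\ref{excludeDef} quantifies over \emph{all} freezing sets of $(Y,\lambda)$, and the paper only considers sets of the form $F(A)$ for $A$ a freezing set of $X$ --- that this loses no generality is exactly your observation that $F^{-1}$ is an isomorphism, so every freezing set $B$ of $Y$ arises this way; second, you explicitly verify the side condition $A\setminus W \neq \emptyset$ (equivalent to $B\setminus F(W)\neq\emptyset$ by bijectivity), which is needed before the excludability hypothesis can be applied, whereas the paper's step ``Since $W$ is excludable, $g = \id_X$'' silently assumes $A\setminus W$ is nonempty. The paper's approach is self-contained and exhibits the conjugation mechanism; yours is shorter, more modular, and strictly more careful on the quantifier and nonemptiness issues.
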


\begin{proof}
Let $A$ be a freezing
set for $X$. Let
$f \in C(Y,\lambda)$
such that 
\begin{equation}
\label{f|}
    f|_{F(A) \setminus F(W)} =
\id_{F(A) \setminus F(W)}
\end{equation}
Then
$g = F^{-1} \circ f \circ F
\in C(X,\kappa)$. 

Let $b \in 
F(A) \setminus F(W)$. Then
$a = F^{-1}(b)
\in A \setminus
W$. We have
\[ g(a) = F^{-1}(f(b)) = F^{-1}(b) = a.
\]
Since every
$a \in A \setminus
W$ satisfies
$a = F^{-1}(b)$
for some
$b \in F(A) \setminus F(W)$, we have
$g|_{A \setminus
W} =
\id_{A \setminus
W}$. Since $W$
is excludable,
$g = \id_X$.
Therefore,
\[ f = F \circ g \circ F^{-1} =
F \circ \id_X \circ 
F^{-1} = \id_Y.
\]
Thus, $F(W)$ is
excludable.
\end{proof}

\section{Articulation points and freezing sets}
\label{articSec}
An {\em articulation point}
or {\em cut point} of a
connected graph 
$(X,\kappa)$ is
a point $x \in X$ such that
$(X \setminus \{x\}, \kappa)$ is not connected
(see Figure~\ref{fig:articulation}).
In this section, we show that
articulation points are often excludable,
by showing that if the set of 
articulation points is removed from
a freezing set, what is
left is often still a freezing
set.

\begin{figure}
    \centering
    \includegraphics{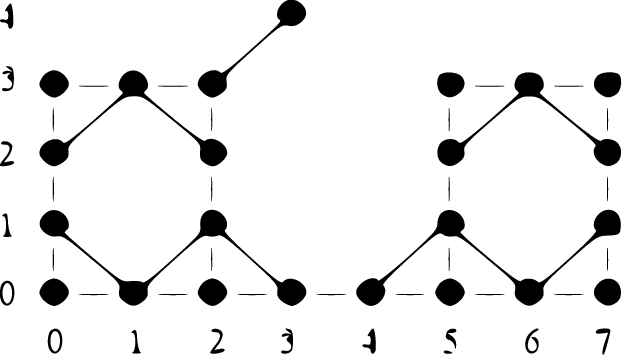}
    \caption{A digital 
    image shown with 
    the $c_2$ adjacency.
    The set of 
    articulation points is
    $\{(2,3), \, (3,0),
    \, (4,0)\}$.
    }
    \label{fig:articulation}
\end{figure}

\begin{lem}
\label{articulRetraction}
    Let $M$ be the set of 
    articulation points for
    the connected digital image
    $(X,\kappa)$. Let
    $K$ be a $\kappa$-component
    of $X \setminus M$.
    Then there is a 
    $\kappa$-retraction of
    $X$ to $X \setminus K$.
\end{lem}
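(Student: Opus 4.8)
The plan is to build the retraction by collapsing the component~$K$ onto the articulation points through which it is attached to the rest of~$X$, while leaving $X \setminus K$ fixed. First I would record the structural fact that makes this possible: since $K$ is an \emph{entire} $\kappa$-component of $X \setminus M$, no point of $K$ can be $\kappa$-adjacent to a point of $(X \setminus M) \setminus K$, for such an adjacency would merge the two into one component. Hence every neighbor of $K$ lying outside $K$ must belong to~$M$; that is, $K$ communicates with the rest of~$X$ only through articulation points. I would set $A = \{a \in M \mid a \adj_{\kappa} k \text{ for some } k \in K\}$, the set of attaching articulation points, and note $A \subset X \setminus K$ (as $M \cap K = \emptyset$), and that $A \neq \emptyset$ because $X$ is connected.

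Next I would define a candidate retraction $r \colon X \to X \setminus K$ by $r|_{X \setminus K} = \id_{X \setminus K}$, and, on~$K$, by sending each point of~$K$ toward an attaching articulation point; in the simplest case, a single fixed $a_0 \in A$. To verify $r \in C(X,\kappa)$ I would invoke Theorem~\ref{continuityPreserveAdj}, checking only adjacent pairs $x \adj_{\kappa} x'$. If both lie in $X \setminus K$ there is nothing to prove, and if both lie in~$K$ their images agree; the only substantive case is $k \adj_{\kappa} q$ with $k \in K$ and $q \in X \setminus K$, where the structural fact above forces $q \in A$, so that one needs $r(k) \adjeq_{\kappa} q$. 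When $K$ is attached to a single articulation point, or to several that are pairwise $\kappa$-adjacent, collapsing all of~$K$ to one such $a_0$ makes this immediate, and since $r$ has image $X \setminus K$ and fixes it pointwise, $r$ is the desired retraction.

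The hard part is exactly this last continuity check when $K$ is attached to several articulation points that are \emph{not} mutually $\kappa$-adjacent: a constant collapse to a single $a_0$ then fails, since a boundary point of~$K$ adjacent to some $q \in A$ with $q \not\adjeq_{\kappa} a_0$ would be sent to a non-neighbor of~$q$. To handle this I would instead push~$K$ into $X \setminus K$ along $\kappa$-paths, first establishing that $X \setminus K$ is $\kappa$-connected (using that $K$ touches the rest of~$X$ only through~$A$, so any $\kappa$-path that dips into~$K$ can be rerouted through the attaching points) and then choosing the images of the boundary points of~$K$ so that the adjacencies to all the distinct members of~$A$ are respected \emph{simultaneously} --- in effect realizing the attaching points as the endpoints of compatible length-controlled walks in $X \setminus K$ that are allowed to repeat vertices. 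I expect this simultaneous-compatibility requirement to be the principal obstacle, and the point at which any structural hypotheses on $(X,\kappa)$ beyond connectedness would have to be brought to bear.
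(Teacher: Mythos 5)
Your opening structural fact is correct and is the right foundation: since $K$ is a full component of $X \setminus M$, every neighbor of $K$ outside $K$ lies in $M$, so $K$ meets the rest of $X$ only through the attaching set $A$ of articulation points adjacent to $K$. In fact your treatment of the constant-collapse case is \emph{more} careful than the paper's own proof. The paper collapses all of $K$ to a single point $x_0 \in X \setminus (K \cup M)$ chosen merely to be adjacent to \emph{some} point of $M$, and asserts that continuity is ``easily seen''; but continuity across an edge $k \adj_{\kappa} q$ with $k \in K$, $q \in A$ requires $x_0$ to be adjacent or equal to $q$, which that choice does not guarantee. Already for the $c_1$-path $a, m_1, c, m_2, b$ of five collinear points with $K = \{a\}$ (so $M = \{m_1, c, m_2\}$), the paper's recipe forces $x_0 = b$ and yields the discontinuous map $a \mapsto b$, whereas your instinct to aim at the attaching point (send $a$ to $m_1$ or to $c$) gives a correct retraction. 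When $\#A = 1$, or more generally when some point of $X \setminus K$ is adjacent or equal to every member of $A$, your construction works and the paper's does not.

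The case you could not close --- several attaching points with no simultaneously compatible target --- is not a failure of ingenuity on your part: the lemma is false in the stated generality, so no rerouting-along-walks device can rescue it. Take $X \subset \Z^2$ with $c_1$ to be the $8$-cycle through $q_1=(0,0)$, $x=(1,0)$, $q_2=(2,0)$, $(2,1)$, $(2,2)$, $(1,2)$, $(0,2)$, $(0,1)$, together with pendant points $u_1=(0,-1)$ and $u_2=(2,-1)$. Then $M=\{q_1,q_2\}$ and $K=\{x\}$ is a component of $X \setminus M$. A retraction of $X$ onto $X \setminus K$ must fix $q_1$ and $q_2$ and send $x$ to a point of $X \setminus \{x\}$ adjacent or equal to both of them; but the closed $c_1$-neighborhoods of $q_1$ and $q_2$ in $X$ intersect only in $\{x\}$, so no such retraction exists. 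Thus your closing suspicion is exactly right: a hypothesis beyond connectedness is needed (for instance $\#A=1$, or the existence of a common neighbor of $A$ in $X \setminus K$), and this is precisely the point at which the paper's one-line continuity claim fails; note also that the paper's ``without loss of generality $M \neq \emptyset$'' conceals a genuine hypothesis, since for $M = \emptyset$ (e.g.\ a simple closed curve) one has $K = X$ and a retraction onto the empty set is impossible. The shortcoming of your write-up is only that it ends in a plan rather than a proof; the plan correctly isolates the fatal obstruction that the paper's argument passes over.
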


\begin{proof}
Without loss of generality,
$M \neq \emptyset$.

    Since $X$ is connected,
    there exists $x_0 \in X \setminus (K \cup M)$ 
    such that $x_0$ is
    $\kappa$-adjacent to 
    a point of~$M$.
    By choice of $M$, no 
    point of $K$ is 
    adjacent to $x_0$.
    Let $r: X \to X$ be 
    the function
    \[ r(x) = \left \{
    \begin{array}{ll}
        x & \mbox{if } x \in X \setminus K; \\
        x_0 & \mbox{if } x \in K.
    \end{array}
    \right .
    \]
    It is easily seen that
    $r$ is a 
    $\kappa$-retraction of
    $X$ to $X \setminus K$.
\end{proof}

\begin{lem}
\label{articPtFix}
    Let $x_0$ be an
    articulation point for
    the connected digital image
    $(X,\kappa)$.
    Let $K_1$
    and $K_2$ be distinct
    $\kappa$-components 
     of
    $X \setminus \{x_0\}$.
    Let
    $f \in C(X,\kappa)$
    such that for some
    $x_1 \in K_1$ and
    $x_2 \in K_2$,
    $\{x_1,x_2\} \subset \Fix(f)$. Then
    $x_0 \in \Fix(f)$.
\end{lem}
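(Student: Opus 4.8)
The plan is to work with the shortest-path (geodesic) metric $d$ on the connected image $(X,\kappa)$, where $d(a,b)$ denotes the length of a shortest $\kappa$-path from $a$ to $b$. First I would record that continuity makes $f$ non-expansive: by Theorem~\ref{continuityPreserveAdj}, adjacent points have adjacent-or-equal images, so stepping along a shortest $a$--$b$ path gives $d(f(a),f(b)) \le d(a,b)$ for all $a,b \in X$.

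The structural step is to exploit the articulation point. Since $x_1 \in K_1$ and $x_2 \in K_2$ lie in distinct $\kappa$-components of $X \setminus \{x_0\}$, every $\kappa$-path from $x_1$ to $x_2$ must contain $x_0$; in particular, a shortest such path splits at its visit to $x_0$ into a geodesic inside $K_1 \cup \{x_0\}$ and a geodesic inside $\{x_0\} \cup K_2$. This yields the additivity $d(x_1,x_2) = d(x_1,x_0) + d(x_0,x_2)$, and, more importantly, the rigidity that on every geodesic from $x_1$ to $x_2$ the vertex at distance $d(x_1,x_0)$ from $x_1$ is exactly $x_0$. This rigidity holds because a geodesic is simple, on it the index of a vertex equals its distance from the start, and $x_0$ occurs on every such geodesic at index $d(x_1,x_0)$.

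Now set $z = f(x_0)$. Using $f(x_1)=x_1$, $f(x_2)=x_2$ together with non-expansiveness gives $d(x_1,z) \le d(x_1,x_0)$ and $d(z,x_2) \le d(x_0,x_2)$, whence $d(x_1,z)+d(z,x_2) \le d(x_1,x_2)$. The triangle inequality supplies the reverse inequality, so both estimates are in fact equalities; thus $z$ lies on a geodesic from $x_1$ to $x_2$ (namely the concatenation of a shortest $x_1$--$z$ path with a shortest $z$--$x_2$ path, which has total length $d(x_1,x_2)$) and satisfies $d(x_1,z) = d(x_1,x_0)$. By the rigidity of the previous paragraph, the vertex at that distance on such a geodesic is $x_0$, so $z = x_0$; that is, $f(x_0) = x_0$, as required.

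I expect the main obstacle to be the structural step: carefully justifying, from the definition of articulation point, both the distance additivity and the rigidity that the point at distance $d(x_1,x_0)$ along any shortest $x_1$--$x_2$ path is forced to equal $x_0$, which relies on geodesics being simple and passing through the cut point. Establishing non-expansiveness from Theorem~\ref{continuityPreserveAdj} is routine, and the triangle-inequality squeeze is immediate once the additivity is in hand.
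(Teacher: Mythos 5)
Your proof is correct. It rests on the same two pillars as the paper's proof --- continuity is non-expansive for the shortest-path metric (via Theorem~\ref{continuityPreserveAdj}), and every $x_1$--$x_2$ path must pass through the cut point $x_0$ --- but the execution is genuinely different. The paper fixes shortest paths $P_1$ from $x_1$ to $x_0$ and $P_2$ from $x_2$ to $x_0$, notes that $f(P_1 \cup P_2)$ is again an $x_1$--$x_2$ path and hence contains $x_0$, locates $x_0$ inside $f(P_1)$, and derives a length contradiction if $f(x_0) \neq x_0$. You never examine image paths beyond non-expansiveness: you pin down $z = f(x_0)$ purely metrically via the squeeze $d(x_1,z) \le d(x_1,x_0)$ and $d(z,x_2) \le d(x_0,x_2)$ against the cut-point additivity $d(x_1,x_2) = d(x_1,x_0) + d(x_0,x_2)$, then use geodesic rigidity (on a shortest path, index equals distance from the start, and $x_0$ sits at index $d(x_1,x_0)$ on every $x_1$--$x_2$ geodesic) to force $z = x_0$. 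Your route costs more setup but buys rigor that the paper glosses over: it sidesteps the paper's implicit ``without loss of generality, $x_0 \in f(P_1)$'' (a priori $x_0$ might occur only in $f(P_2)$), and it makes exact the length bookkeeping compressed into the paper's phrase ``has length greater than that of $P_1$.'' The paper's route is shorter and more concrete. Two small points to make explicit when you write this up: first, equality of the sum together with the two individual bounds forces each bound to be an equality (if $d(x_1,z) < d(x_1,x_0)$ then $d(z,x_2) > d(x_0,x_2)$, a contradiction); second, the concatenation of a shortest $x_1$--$z$ path with a shortest $z$--$x_2$ path is a walk of length exactly $d(x_1,x_2)$ and is therefore automatically simple (a repeated vertex could be excised to give a shorter walk), so your rigidity observation legitimately applies to it.
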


\begin{proof}
Let $P_i$ be a shortest
    $\kappa$-path in $X$
    from $x_i$ to $x_0$,
    $i \in \{1,2\}$. Then
    $P_1 \cup P_2$ is a path
    from $x_1$ to $x_2$. 
    
    By choice of $x_0$ we
    must have 
    $x_0 \in f(P_1)$.
    If $f(x_0) \neq x_0$,  $f(P_1)$ is
    a path from $x_1=f(x_1)$
    to $x_0$ to $f(x_0)$ that
    has length greater than
    that of $P_1$, which is
    impossible. The assertion
    follows.
\end{proof}

\begin{thm}
\label{exclusion}
    Let $W$ be the set
    of articulation
    points for
    the finite connected 
    digital image
    $(X,\kappa)$, with
    $W \neq \emptyset$. If~$W$ is
    a proper subset of~$X$,
    then $W$ is excludable.
\end{thm}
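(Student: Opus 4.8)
The plan is to verify the defining property in Definition~\ref{excludeDef} directly: given an arbitrary freezing set $A$ for $(X,\kappa)$ with $A \setminus W \neq \emptyset$, I will show that $A \setminus W$ is again a freezing set. So let $f \in C(X,\kappa)$ satisfy $f|_{A \setminus W} = \id_{A \setminus W}$; the goal is to conclude $f = \id_X$. The strategy is to upgrade ``$f$ fixes $A \setminus W$'' to ``$f$ fixes all of $A$'', after which $A$ being a freezing set forces $f = \id_X$ immediately.

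First I would establish that every $\kappa$-component $K$ of $X \setminus W$ contains a point of $A$, equivalently of $A \setminus W$ (since $K$ contains no articulation point). This is where the freezing hypothesis on $A$ is used essentially. By Lemma~\ref{articulRetraction} there is a $\kappa$-retraction $r$ of $X$ onto $X \setminus K$, and $r \neq \id_X$ because $K \neq \emptyset$. If $A$ missed $K$ entirely, then $A \subseteq X \setminus K = \Fix(r)$, so $r|_A = \id_A$, contradicting that $A$ is a freezing set. Hence $(A \setminus W) \cap K \neq \emptyset$ for every component $K$ of $X \setminus W$.

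Next I would show that for each articulation point $x_0 \in A \cap W$, every $\kappa$-component $C$ of $X \setminus \{x_0\}$ meets $A \setminus W$. For this I use the standard fact that a leaf of a spanning tree of a finite connected graph is never a cut point: deleting such a leaf leaves a spanning tree of the remainder, hence a connected graph. Taking a spanning tree of $X$ rooted at $x_0$, each component $C$ of $X \setminus \{x_0\}$ contains such a leaf, i.e.\ a non-articulation point $z$ of $X$. The component $K$ of $X \setminus W$ containing $z$ is connected and avoids $x_0$, so $K \subseteq C$; combined with the previous paragraph, $(A \setminus W) \cap C \supseteq (A \setminus W) \cap K \neq \emptyset$. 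Since $x_0$ is an articulation point, $X \setminus \{x_0\}$ has at least two components, so at least two distinct components of $X \setminus \{x_0\}$ each carry a point of $A \setminus W \subseteq \Fix(f)$; Lemma~\ref{articPtFix} then yields $f(x_0) = x_0$.

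Combining the steps, $f$ fixes $A \setminus W$ by hypothesis and fixes every point of $A \cap W$ by the previous paragraph, so $A \subseteq \Fix(f)$. As $A$ is a freezing set, $f = \id_X$, which proves that $A \setminus W$ is a freezing set and hence that $W$ is excludable. I expect the main obstacle to be the second step: guaranteeing that both ``sides'' of each articulation point of $A$ actually carry a fixed point of $f$. This needs the non-obvious combination of the retraction argument (to place points of $A \setminus W$ in every component of $X \setminus W$) with the spanning-tree observation (to guarantee that every component of $X \setminus \{x_0\}$ contains a non-articulation point, and therefore one of the fixed points produced by the first step).
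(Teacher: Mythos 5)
Your proof is correct, and it shares the overall skeleton of the paper's argument---use Lemma~\ref{articulRetraction} together with the freezing property of $A$ to force points of $A$ into components, use Lemma~\ref{articPtFix} to conclude that articulation points are fixed, and then conclude $A \subset \Fix(f)$, hence $f = \id_X$---but your middle step is genuinely different, and in fact more careful than the paper's. The paper works directly with two components $K_1, K_2$ of $X \setminus \{x_0\}$ for a single articulation point $x_0$, cites Lemma~\ref{articulRetraction} to get $A \cap K_i \neq \emptyset$, and immediately invokes Lemma~\ref{articPtFix}. That is looser than your version on two counts. First, Lemma~\ref{articulRetraction} is stated for components of $X \setminus W$ with $W$ the \emph{full} set of articulation points, not for components of $X \setminus \{x_0\}$; you apply it exactly as stated. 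Second, and more substantively, Lemma~\ref{articPtFix} requires points of $\Fix(f)$ in both $K_1$ and $K_2$, whereas the paper has only produced points of $A$ there; such a point could itself lie in $W$, where $f$ is not yet known to act as the identity, since the hypothesis is only $f|_{A \setminus W} = \id_{A \setminus W}$. Your two-stage argument repairs exactly this: stage one produces a point of $A$ in each component $K$ of $X \setminus W$, and that point is automatically in $A \setminus W \subset \Fix(f)$ because $K$ misses $W$; stage two---the spanning-tree observation---transports these genuine fixed points into every component of $X \setminus \{x_0\}$, so Lemma~\ref{articPtFix} is applied with its hypotheses honestly verified. Your brisk sub-claim there is correct: each branch of a spanning tree $T$ at $x_0$ lies wholly inside one component of $X \setminus \{x_0\}$ and contains a leaf of $T$ distinct from $x_0$ (a one-vertex branch is itself such a leaf), and a leaf of a spanning tree is never a cut point, so every component contains a non-articulation point and hence an entire component of $X \setminus W$. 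The cost of your route is one extra elementary graph-theoretic step; what it buys is an argument in which every citation matches the cited statement, and which fills a real gap in the paper's own write-up of this theorem.
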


\begin{proof}
    Let $A$ be a
    freezing set for
    $X$. Let $f \in C(X,\kappa)$ such
    that $f|_{A \setminus W} = \id_{A \setminus W}$. 
    Let $x_0 \in W$.
Then there exist distinct 
components $K_1,K_2$ of
$X \setminus \{x_0\}$.
By Lemma~\ref{articulRetraction},
there exists a retraction
$r$ of $X$ to $X \setminus K_1$. It follows that
$A \cap K_1 \neq \emptyset$,
for otherwise
$r|_A = \id_A$
yet $r \neq \id_X$,
contrary to $A$ being
a freezing set. 
Similarly, $A \cap K_2 \neq \emptyset$.

By Lemma~\ref{articPtFix},
$f(x_0) = x_0$. Since $x_0$
was taken as an arbitrary
member of $W$, we have
\[ f|_{A \setminus W} =
    \id_{A \setminus W}
~~~\Rightarrow ~~~
f|_{A \cup W} = 
\id|_{A \cup W}
 ~~~\Rightarrow ~~~ f|_A = \id_A
 ~~~\Rightarrow ~~~ f = \id_X.
    \]
Thus $W$ is excludable.  
\end{proof}

\begin{remark}
Theorem~\ref{exclusion} 
implies that if 
$(X,\kappa)$ is a
wedge of two finite
digital images,
    $(X,\kappa)=
    (X_1,\kappa) \vee
    (X_2,\kappa)$, then
    the ``wedge point" 
    of $X$ is 
    excludable, hence 
    does not belong
    to any minimal 
    freezing set for 
    $(X,\kappa)$.
\end{remark}

\section{1-D local extrema}
\label{1-DlocalSec}
We introduce a kind of local extreme point
and study the relationship of such a point
to a freezing set.

\subsection{Definition and relation to freezing sets}
Let $(X,\kappa)$ be
a digital image, where
$X \subset \Z^n$. Let
$x = (x_1, \ldots, x_n) \in X$, where each
$x_i \in \Z$. We say
$x$ is a 
{\em 1-coordinate
local maximum at 
index~$i$ for $(X,\kappa)$}
if for some index~$i$
and all $y=(y_1, \ldots, y_n) \in N(X,x,\kappa)$,
$x_i > y_i$; and
$x' = (x_1, \ldots, x_{i-1}, x_i -1, x_{i+1}, \ldots, x_n)$ is a
{\em justifying neighbor
at index~$i$} of~$x$.
We say
$x$ is a 
{\em 1-coordinate
local minimum at 
index~$i$ for $(X,\kappa)$}
if for some index~$i$
and all $y=(y_1, \ldots, y_n) \in N(X,x,\kappa)$,
$x_i < y_i$; and
$x' = (x_1, \ldots, x_{i-1}, x_i +1, x_{i+1}, \ldots, x_n)$ is a
{\em justifying neighbor}
of~$x$.
We say
$x$ is a 
{\em 1-coordinate
local extremum for $(X,\kappa)$}
if $x$ is either a
1-coordinate
local maximum or a
1-coordinate
local minimum for $(X,\kappa)$. Such points
are often easily recognized, and, we will
show, often are members
of minimal freezing sets.

Note if $x$ and $x'$ 
are, respectively, a 
1-coordinate local 
extremum and its 
justifying neighbor, then
$x$ and $x'$ differ in
exactly one index. Thus,
$x \adj_{c_1} x'$.

Neither being nor not
being a 1-coordinate 
local extremum is necessarily
preserved by isomorphism, as the 
following shows.

\begin{exl}
\label{C8-c2}
Let 
\[ X = \{(x,y) \in \Z^2 \mid |x| + |y| = 2 \} ~~~~\mbox{(see Figure~\ref{fig:diamondRadius2})}.
\]

\begin{figure}
   \includegraphics{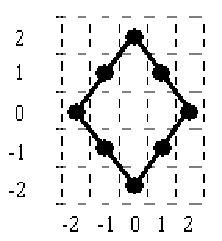}
    \label{fig:diamondRadius2}
    \caption{The digital image $(X,c_2)$
    of Example~\ref{C8-c2}.
   The ``corner" points
    $(2,0),~(0,2),~(-2,0),~(0,-2)$
        are 1-coordinate local
        extrema. None of them has
        a justifying neighbor.}
\end{figure}

Assume the points of $X$ are indexed circularly with
$X = \{x_i\}_{i=0}^7$, with $x_0=(2,0)$, $x_1 = (1,1)$,
etc. Then $(X,c_2)$ is a digital simple closed curve,
for which the ``corner points"
$x_0, x_2, x_4, x_6$ are the 1-coordinate local
extrema. Let $f: X \to X$ be the function
\[ f(x_i) = x_{i+1 \mod 8}.\]
Then $f$ is a $(c_2,c_2)$-isomorphism.
For each index~$i$,
\begin{itemize}
    \item if $x_i$ is a 1-coordinate local
          extremum, then $f(x_i)$ is not 
          a 1-coordinate local extremum; and
     \item if $x_i$ is not a 1-coordinate local
          extremum, then $f(x_i)$ is 
          a 1-coordinate local extremum.
\end{itemize}
\end{exl}

We have the following.

\begin{thm}
\label{localExtremeThm}
    Let $(X,c_u)$ be
a connected digital image, where
$X \subset \Z^n$ and
$1 \le u \le n$. Let
$x_0 = (x_1, \ldots, x_n) \in X$, where each
$x_i \in \Z$. Suppose,
for some index~$i$,
$x_0$ is a 1-coordinate
local extremum for
$(X,c_u)$ with a 
justifying neighbor 
$x' \in X$ 
at index~$i$. Let
$A$ be a freezing set
for $(X,c_u)$. Then
$x_0 \in A$.
\end{thm}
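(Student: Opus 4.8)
The plan is to argue by contradiction, mirroring the construction used in the proof of Theorem~\ref{degree1}. Suppose $x_0 \notin A$. I would build a map $f \in C(X,c_u)$ that fixes every point other than $x_0$ and sends $x_0$ to its justifying neighbor $x'$, and then show that $f$ witnesses the failure of $A$ to be a freezing set. Concretely, define $f : X \to X$ by $f(x) = x$ for $x \neq x_0$ and $f(x_0) = x'$; note $x' \in X$ by hypothesis, so $f$ is well defined. Since $x_0 \notin A$, each $a \in A$ satisfies $a \neq x_0$, so $f|_A = \id_A$ and hence $A \subset \Fix(f)$; and since $x'$ differs from $x_0$ in coordinate~$i$ we have $f \neq \id_X$. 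Thus, once continuity of $f$ is established, Definition~\ref{freezeDef} is contradicted, forcing $x_0 \in A$.

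The substance of the argument is therefore the verification that $f \in C(X,c_u)$, which by Theorem~\ref{continuityPreserveAdj} amounts to checking that $a \adj_{c_u} b$ implies $f(a) \adjeq_{c_u} f(b)$. All adjacencies not involving $x_0$ are preserved automatically, because $f$ agrees with $\id_X$ off $x_0$. So the only thing to check is that for every $y \in N(X, x_0, c_u)$ we have $f(y) = y \adjeq_{c_u} x' = f(x_0)$; that is, each neighbor of $x_0$ must be equal or $c_u$-adjacent to the justifying neighbor $x'$.

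Here is where the local-extremum hypothesis does the work; I will treat the local-maximum case, the minimum case being symmetric. Let $y = (y_1,\ldots,y_n) \in N(X,x_0,c_u)$. By the definition of $c_u$-adjacency, $y$ differs from $x_0$ in a set $S$ of coordinates with $\#S \le u$, each such difference being exactly $\pm 1$, and agrees with $x_0$ elsewhere. The strict inequality $y_i < x_i$ in the definition of a local maximum shows $y_i \neq x_i$, so $i \in S$ and in fact $y_i = x_i - 1 = x'_i$; that is, $y$ and $x'$ agree in coordinate~$i$. Off coordinate~$i$, $x'$ agrees with $x_0$, so $y$ and $x'$ differ exactly on $S \setminus \{i\}$, a set of size at most $u-1$, with each difference equal to $\pm 1$. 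Hence either $y = x'$ or $y \adj_{c_u} x'$, as required, and continuity follows.

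The main obstacle I anticipate is precisely this last coordinate count: one must use that the extremum is \emph{strict} (every neighbor lies strictly below $x_0$ in coordinate~$i$) to collapse the discrepancy at index~$i$ and thereby keep the number of coordinates in which $y$ and $x'$ differ at most $u - 1 \le u$. If the inequality were merely $\ge$, a neighbor $y$ could share coordinate~$i$ with $x_0$ while differing from $x_0$ in $u$ other coordinates; then $y$ and $x'$ would differ in $u+1$ coordinates and fail to be $c_u$-adjacent, breaking continuity. Everything else is routine bookkeeping.
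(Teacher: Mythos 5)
Your proposal is correct and takes essentially the same route as the paper: the paper's proof also defines the map fixing everything except $x_0$ and sending $x_0$ to its justifying neighbor $x'$, verifies $c_u$-continuity by the same coordinate count (a neighbor $y$ of $x_0$ must satisfy $p_i(y)=p_i(x')$ by strictness of the extremum, so $y$ and $x'$ differ in at most $u-1$ coordinates), and concludes that $A$ could not be a freezing set if $x_0 \notin A$. Your write-up of the continuity check is in fact more explicit than the paper's, which compresses it to ``It follows easily.''
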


\begin{proof}
There are at most~$u$
indices~$j$, one of
which is $j=i$, 
at which
$|x_j - p_j(x')| = 1$ and
for all other indices~$k$,
$x_k = p_k(x')$.

Therefore, if 
$A \subset X$ and
$x_0 \not \in A$,
consider the function
$f: X \to X$ given by
\[ f(x) = \left \{
\begin{array}{ll}
    x & \mbox{if } x \neq x_0;  \\
    x' & \mbox{if } x = x_0,
\end{array}
\right .
\]
Then for $x \adj_{c_u} y$,
$f(x)$ and $f(y)$
differ in at most
$u-1$ indices. It
follows easily that
$f \in C(X,c_u)$.
Further, $f|_A = \id_A$
and $f \neq \id_X$. Hence
$x_0 \not \in A$ implies
$A$ is not a freezing set.
The assertion
follows.
\end{proof}

\subsection{Example}
We demonstrate how articulation points and
1-coordinate local extrema
can help us determine
small freezing sets in
the following.
\begin{exl}
\label{extremeFreezeFig}
    Let 
    \[ X = 
    \{(0,1), (1,2), (2,1), (3,0), (3,1),
    (3,2),(4,1), (4,2), (4,3), (5,2)\}.
    \]
See Figure~\ref{fig:kite}.

    \begin{figure}
        \centering
        \includegraphics{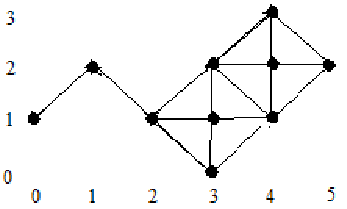}
        \caption{The image $(X,c_2)$ of Example~\ref{extremeFreezeFig}. Articulation points: 
        $(1,2),~(2,1)$.
        1-coordinate local extrema 
        with justifying points:
        $(3,0),~(4,3),~(5,2)$.
        }
        \label{fig:kite}
    \end{figure}

 Let 
   \[ A = \{ (0,1), (3,0), 
   (4,3), (5,2) \}.
\]

   We claim $A$ is a minimal
   freezing set for
   $(X,c_2)$. We show
   this as follows.

   Theorem~\ref{bdFreezes} tells us
   there is a minimal
   freezing set~$B$ for
   $(X,c_2)$ that is a subset of
   $Bd(X)$. Therefore $(3,1)$ and
   $(4,2)$ can be excluded from~$B$. We proceed to
   show $B=A$.

   We must have $(0,1) \in B$, by
   Theorem~\ref{degree1}.

   Since each of $(1,2)$
   and $(2,1)$ is an
   articulation point of
   $(X,c_2)$, by
   Theorem~\ref{exclusion}, 
   they are excluded
   from~$B$. 
   Note also that
   $(1,2)$ is a 1-coordinate local
   maximum in~$X$, but 
   lacks a justifying
   neighbor in~$X$, so
   Theorem~\ref{localExtremeThm} does not apply to
   $(1,2)$.
   
      Each of $(3,0)$, $(4,3)$, and $(5,2)$
   is a 1-coordinate 
   local extreme point
   of~$X$ with justifying
   neighbors $(3,1) \in X$, 
   $(4,2)~\in X$,  and $(4,2)~\in X$,
   respectively, so by
   Theorem~\ref{localExtremeThm}, $\{ (3,0), (4,3),
   (5,2) \} \subset B$.

   Thus $A \subset B$.

Let $f \in C(X,c_2)$ such
that $f|_B = \id_B$.

   Since $(2,1)$ belongs to the
   unique shortest-length $c_2$-path
   between the fixed points $(0,1)$
   and $(3,0)$, 
   Proposition~\ref{uniqueShortest} 
   lets us conclude $(2,1) \in \Fix(f)$.
   Since $(4,1)$ belongs to the
   unique shortest-length $c_2$-path
   between the fixed points $(3,0)$
   and $(5,2)$, 
   Proposition~\ref{uniqueShortest} or
   Lemma~\ref{c2EndsFixed}
   lets
   us conclude $(4,1) \in \Fix(f)$.  
   Similarly, $(3,2)$ belongs to the
   unique shortest-length $c_2$-path
   between the fixed points $(2,1)$
   and $(4,3)$, so
   Theorem~\ref{uniqueShortest} or
   Lemma~\ref{c2EndsFixed}
   lets us conclude
   $(3,2) \in \Fix(f)$.
   Since $f$ is an 
   arbitrary member of
   $C(X,c_2)$, we 
   conclude that
   \[ \{(2,1), (4,1), (3,2)\}
    \subset X \setminus B. 
    \]
   
   Thus $A=B$. Hence by choice of $B$,
$A$ is minimal.
\end{exl}

\section{Bounds on size of freezing set}
\label{boundsSec}
How small, and how big,
can a freezing set be? We provide
bounds on the size of a freezing set in
the following.

\begin{thm}
\label{bounds}
Let $(X,c_u)$ be
a connected finite digital image, where
$X \subset \Z^n$, $n > 1$, and
$1 \le u \le n$. Let $D_1$ be the set of
points that have degree~1 in $(X,c_u)$.
Let $T$ be the set of 1-coordinate
local extrema of~$(X,c_u)$ 
that have justifying
points in~$X$. 
Let $W$ be the set
of articulation points of $(X,c_u)$.
Then there is a minimal
freezing set $A \subset Bd(X)$
for $(X,c_u)$ such that
\[ \#(D_1 \cup T) \le \#A.
\]
If $\emptyset \neq W$ 
and~$W$ is a proper subset of~$X$, then
\[ \#(D_1 \cup T) \le \#A
\le \#Bd(X) - \#W . \]
\end{thm}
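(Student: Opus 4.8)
The plan is to exhibit a single minimal freezing set $A$ that witnesses both inequalities: the lower bound will come from the points that are forced into every freezing set, and the upper bound from the excludability of $W$. By Theorem~\ref{bdFreezes}, $Bd(X)$ is a freezing set for $(X,c_u)$. Since $X$ is finite, I would discard points of $Bd(X)$ one at a time, each time retaining the freezing-set property, until reaching a minimal freezing set $A$ with $A\subseteq Bd(X)$. For the lower bound, note that $D_1\subseteq A$ and $T\subseteq A$ hold for \emph{every} freezing set: each degree-$1$ point lies in every freezing set by Theorem~\ref{degree1}, and each $1$-coordinate local extremum having a justifying neighbor lies in every freezing set by Theorem~\ref{localExtremeThm}. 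Hence $D_1\cup T\subseteq A$, so $\#(D_1\cup T)\le \#A$, which is the first displayed inequality and needs no hypothesis on $W$.

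For the reduction of the upper bound, assume $\emptyset\neq W\subsetneq X$. By Theorem~\ref{exclusion}, $W$ is excludable. Because $A$ is a \emph{minimal} freezing set, the remark following Definition~\ref{excludeDef} gives $A\cap W=\emptyset$; together with $A\subseteq Bd(X)$ this yields $A\subseteq Bd(X)\setminus W$, so that $\#A\le \#(Bd(X)\setminus W)=\#Bd(X)-\#(Bd(X)\cap W)$. Thus the stated bound $\#A\le \#Bd(X)-\#W$ follows \emph{provided} $W\subseteq Bd(X)$, that is, provided every articulation point is a boundary point.

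I expect the containment $W\subseteq Bd(X)$ to be the crux and the main obstacle, since it is the only place where the $c_u$-notion of separation must be reconciled with the $c_1$-defined boundary. I would prove the contrapositive: if $x_0\in X\setminus Bd(X)$, then $x_0$ is not an articulation point. If $x_0\notin Bd(X)$, then all $2n$ points differing from $x_0$ in a single coordinate by $\pm 1$ lie in $X$. For $u\ge 2$ these form a $c_u$-connected cluster avoiding $x_0$: any two non-antipodal ones differ in exactly two coordinates by $1$ and so are $c_u$-adjacent, and since $n>1$ the cluster is connected. Moreover every $c_u$-neighbor $v$ of $x_0$ is $c_u$-adjacent or equal to some member of this cluster (drop one of the coordinates in which $v$ differs from $x_0$), so any $c_u$-path through $x_0$ can be rerouted around the cluster; hence $X\setminus\{x_0\}$ is connected and $x_0\notin W$. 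This gives $W\subseteq Bd(X)$, whence $\#(Bd(X)\cap W)=\#W$ and the upper bound closes. One caveat I would flag: the rerouting exploits the diagonal adjacencies available only when $u\ge 2$; for $u=1$ an interior point can genuinely separate $X$ (the center of a ``plus'' is a $c_1$-articulation point lying outside $Bd(X)$), so the sharp subtraction of $\#W$ appears to require $u\ge 2$, and I would expect the $u=1$ case to need either an extra hypothesis or the weaker conclusion $\#A\le \#(Bd(X)\setminus W)$.
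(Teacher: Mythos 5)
Your proposal follows the same skeleton as the paper's proof---extract a minimal freezing set $A \subseteq Bd(X)$ from Theorem~\ref{bdFreezes} (using finiteness to pass to a minimal one), obtain the lower bound from Theorems~\ref{degree1} and~\ref{localExtremeThm}, and obtain the upper bound from Theorem~\ref{exclusion} together with the remark that a minimal freezing set is disjoint from any excludable set---but you go beyond the paper at exactly the critical point. The paper's entire justification of the containment $W \subseteq Bd(X)$ is the phrase ``since $n>1$''; you supply an actual argument (rerouting any path through an interior point via the $2n$ points differing from it in one coordinate, which is valid for $u \ge 2$ because those points form a $c_2$-connected cluster and every $c_u$-neighbor of the interior point is adjacent-or-equal to a cluster point), and your caveat about $u=1$ is not mere prudence: it exposes a genuine error in the paper. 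Your ``plus'' example $X=\{(0,0),(1,0),(-1,0),(0,1),(0,-1)\}$ with $\kappa=c_1$ satisfies every hypothesis of Theorem~\ref{bounds} with $n=2>1$ and $u=1$: here $W=\{(0,0)\}$ is nonempty and proper, $Bd(X)$ is the set of four tips, and each tip has degree~$1$, so by Theorem~\ref{degree1} every freezing set contains all four tips. Since the four tips do form a freezing set (the center is recovered via Proposition~\ref{uniqueShortest}), the unique minimal freezing set has cardinality $4$, whereas the theorem asserts $\#A \le \#Bd(X)-\#W = 3$. The failure occurs precisely because $(0,0)\in W\setminus Bd(X)$; the remark following the theorem shows the author noticed this phenomenon in $\Z^1$ but wrongly believed $n>1$ eliminates it, when in fact it is $u\ge 2$ that does.

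So your proof is correct as written for $u\ge 2$, and your proposed repair for general $u$---replacing the upper bound by $\#A \le \#\bigl(Bd(X)\setminus W\bigr) = \#Bd(X)-\#\bigl(Bd(X)\cap W\bigr)$---is the right statement, since it follows from excludability and $A\subseteq Bd(X)$ alone, without any containment of $W$ in $Bd(X)$. One small point you inherit from the paper rather than introduce: the remark after Definition~\ref{excludeDef} yields $A\cap W=\emptyset$ only when $A\not\subseteq W$ (excludability says nothing when $A\setminus W=\emptyset$), so strictly speaking both you and the paper use minimality in that slightly stronger form; this is harmless whenever $D_1\cup T\neq\emptyset$ or, more generally, whenever some freezing set meets $X\setminus W$, but it is worth a sentence in a fully rigorous write-up.
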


\begin{proof}
    By Theorem~\ref{bdFreezes}, there is
    a minimal freezing set
    $A \subset Bd(X)$ for $(X,c_u)$.
    We have $T \subset Bd(X)$,
    $D_1 \subset Bd(X)$, and, since~$n > 1$, $W \subset Bd(X)$.
    Thus the conclusion follows from
    Theorems~\ref{exclusion}, \ref{degree1},
    and~\ref{localExtremeThm}.
\end{proof}

\begin{remark}
    We need $n > 1$ in  
    Theorem~\ref{bounds},
    since if $X = [0,2]_{\Z} \subset \Z$,
    we have that $1$ is an
    articulation point for $(X,c_1)$
    but is not a member of $Bd(X)$.
\end{remark}

\section{Further remarks}
\label{furtherRem}
We have presented the notion
of excludable points in
digital topology, and
have shown that these may be
excluded from all freezing
sets. We have shown that
articulation points are 
excludable. We have shown
that points of degree~1 
and certain local extrema
are points that must be
included in freezing sets.
We have obtained bounds on
the cardinality of a minimal
freezing set for a connected
digital image.

\end{document}